\documentclass[a4paper,12pt,reqno]{amsart}

\usepackage[utf8]{inputenc}
\usepackage[top=25truemm,bottom=25truemm,left=20truemm,right=20truemm]{geometry}

\usepackage{amsmath}
\usepackage{amsthm,amsfonts}
\usepackage{amssymb}
\usepackage{stmaryrd}
\usepackage{mathtools}
\usepackage{mathrsfs} 
\usepackage[bbgreekl]{mathbbol} 
\usepackage{tikz-cd}
\usetikzlibrary{decorations.markings,decorations.pathmorphing,shapes.multipart,fit}
\usepackage{thmtools} 
\makeatletter
\@ifundefined{newcounteralias}{}{%
    \renewcommand\thmt@autorefsetup{%
        \@xa\def\csname\thmt@envname autorefname\@xa\endcsname\@xa{\thmt@thmname}%
  }%
}
\makeatother

\usepackage{enumitem}
\usepackage{CJKutf8}

\DeclareSymbolFontAlphabet{\mathbb}{AMSb}
\DeclareSymbolFontAlphabet{\mathbbl}{bbold}
\DeclareEmphSequence{\bfseries\itshape}

\renewcommand{\epsilon}{\varepsilon}
\renewcommand{\phi}{\varphi}

\setlist{nosep}
\setlist*[enumerate,1]{label*=(\roman*)}

\makeatletter
\newcommand{\labeleditem}[1]{
\item[\text{#1}]\protected@edef\@currentlabel{\text{#1}}\phantomsection
}
\makeatother

\usepackage[%
    backend=biber,%
    style=alphabetic,%
    sorting=nyt,%
    giveninits=true,
    backref=true,%
    isbn=false,%
    url=true,%
]{biblatex}
\makeatletter
\renewrobustcmd*{\mkbibemph}{\mkbibitalic}
\protected\long\def\blx@imc@mkbibemph#1{\blx@imc@mkbibitalic{#1}}
\makeatother

\definecolor{darkblue}{rgb}{0.2,0,0.6}
\definecolor{darkgreen}{rgb}{0.2,0.5,0.2}
\usepackage[%
    colorlinks=true,%
    linkcolor=darkblue,%
    citecolor=green,%
    urlcolor=darkgreen,%
    hypertexnames=false,%
    naturalnames=true,
    pdfusetitle=true,
]{hyperref}
\usepackage[%
    nameinlink,%
]{cleveref}

\crefname{equation}{}{}
\crefname{enumi}{}{}
\crefname{section}{Section}{Sections}
\crefname{appendix}{Appendix}{Appendices}
\crefname{figure}{Figure}{Figures}

\numberwithin{equation}{section}

\declaretheorem[%
    style=definition,%
    numberwithin=section,%
    qed=$\lrcorner$,%
]{definition}
\declaretheorem[%
    style=definition,%
    sibling=definition,%
    qed=$\lrcorner$,%
]{remark,example,notation,fact,construction}
\declaretheorem[%
    style=definition,%
    sibling=definition,%
    name={Open Problem}
]{openproblem}
\declaretheorem[%
    style=definition,%
    sibling=definition,%
]{theorem,lemma,corollary,proposition}
\declaretheorem[%
    style=definition,%
    numbered=no,%
    name=Theorem,%
]{theorem*}
\declaretheorem[%
    style=definition,%
    numbered=no,%
    name={Open Problem},%
]{openproblem*}
\declaretheorem[%
    style=definition,%
    numbered=no,%
    name={Question},%
]{question*}

\crefname{theorem}{Theorem}{Theorems}
\crefname{definition}{Definition}{Definitions}
\crefname{lemma}{Lemma}{Lemmas}
\crefname{corollary}{Corollary}{Corollaries}
\crefname{proposition}{Proposition}{Propositions}
\crefname{remark}{Remark}{Remarks}
\crefname{example}{Example}{Examples}
\crefname{notation}{Notation}{Notations}
\crefname{construction}{Construction}{Construction}
\crefname{claim}{Claim}{Claims}
\crefname{fact}{Fact}{Facts}

\usepackage{xpatch}
\makeatletter   
\xpatchcmd{\@tocline}
{\hfil\hbox to\@pnumwidth{\@tocpagenum{#7}}\par}
{\ifnum#1<2\hfill\else\dotfill\fi\hbox to\@pnumwidth{\@tocpagenum{#7}}\par}
{}{}
\def\l@subsection{\@tocline{2}{0pt}{2pc}{6pc}{}}
\makeatother    

\tikzcdset{equal/.append style={double distance=1.0pt,-}}

\tikzcdset{%
    arrow style=tikz,
    diagrams={>={Straight Barb[scale=0.8]}}
}

\tikzstyle{tri} = [row sep={1.6em}, column sep={0.8em}]
\tikzstyle{tinytri} = [row sep={0.8em}, column sep={0.1em}]

\tikzstyle{huge} = [row sep={3.6em}, column sep={3.6em}]
\tikzstyle{large} = [row sep={2.7em}, column sep={2.7em}]
\tikzstyle{normal} = [row sep={1.8em}, column sep={1.8em}]
\tikzstyle{scriptsize} = [row sep={1.35em}, column sep={1.35em}]
\tikzstyle{small} = [row sep={0.9em}, column sep={0.9em}]
\tikzstyle{tiny} = [row sep={0.45em}, column sep={0.45em}]

\tikzstyle{hugecolumn} = [column sep={3.6em}]
\tikzstyle{largecolumn} = [column sep={2.7em}]
\tikzstyle{normalcolumn} = [column sep={1.8em}]
\tikzstyle{scriptsizecolumn} = [column sep={1.35em}]
\tikzstyle{smallcolumn} = [column sep={0.9em}]
\tikzstyle{tinycolumn} = [column sep={0.45em}]

\tikzstyle{hugerow} = [row sep={3.6em}]
\tikzstyle{largerow} = [row sep={2.7em}]
\tikzstyle{normalrow} = [row sep={1.8em}]
\tikzstyle{scriptsizerow} = [row sep={1.35em}]
\tikzstyle{smallrow} = [row sep={0.9em}]
\tikzstyle{tinyrow} = [row sep={0.45em}]

\newcommand{\A}{\mathscr{A}}
\newcommand{\B}{\mathscr{B}}
\newcommand{\C}{\mathscr{C}}
\newcommand{\D}{\mathscr{D}}

\renewcommand{\I}{\mathscr{I}}

\renewcommand{\S}{\mathscr{S}}

\newcommand{\bN}{\mathbb{N}}

\newcommand{\Set}{\mathbf{Set}}

\newcommand{\Obj}{\mathrm{Obj}}
\newcommand{\Mor}{\mathrm{Mor}}

\newcommand{\id}{\mathsf{id}}

\newcommand{\Colim}{\mathop{\mathrm{Colim}}}
\newcommand{\El}{{\textstyle\int}}

\newcommand{\abs}[1]{{|#1|}}

\newcommand{\incat}[1]{\hspace{1em}\text{in }{#1}}

\renewcommand{\emptyset}{\varnothing}

\newcommand{\yoneda}{\text{\begin{CJK}{UTF8}{min}よ\end{CJK}}}

\NewDocumentCommand{\arr}{D(){} O{} O{1}}{\mathrel{%
    \begin{tikzcd}[column sep={#3em}, ampersand replacement=\&]
        \hspace{-2ex} \& \hspace{-2ex}
        \arrow[from=1-1, to=1-2, "{%
            \IfValueTF{#1}{%
                \raisebox{0pt}[3pt][\depth]{$\scriptstyle #1$}%
            }{%
                \scriptstyle #1%
            }%
        }", #2]
    \end{tikzcd}
}}
\NewDocumentCommand{\rra}{D(){} O{} O{1}}{\mathrel{%
    \begin{tikzcd}[column sep={#3em}, ampersand replacement=\&]
        \hspace{-2ex} \& \hspace{-2ex}
        \arrow[from=1-2, to=1-1, "{%
            \IfValueTF{#1}{%
                \raisebox{0pt}[3pt][\depth]{$\scriptstyle #1$}%
            }{%
                \scriptstyle #1%
            }%
        }"', #2]
    \end{tikzcd}
}}

\newcommand{\Ind}{\mathop{\mathbf{Ind}}}
\newcommand{\Sind}{\mathop{\mathbf{Sind}}}
\newcommand{\Rec}{\mathop{\mathbf{Rec}}}
\newcommand{\Psh}{\mathop\mathbf{Psh}}

\newcommand{\Pfin}{\mathop{\mathrm{\mathcal{P}_{fin}}}}
\newcommand{\Cospan}{\mathop{\mathbf{Cospan}}}
\newcommand{\card}[1]{\abs{#1}}

\tikzset{%
    header/.style={%
        label={%
            [rectangle, fill=white, draw, inner sep=3pt, anchor=north west]%
            north west:{#1}%
        }%
    },%
    frame/.style={%
        draw,%
        anchor=south west,%
        inner sep=0pt,%
    },%
}

\title{Cocompletion under sifted colimits need not be finitely accessible}

\author{Yuto Kawase}
\address{Research Institute for Mathematical Sciences, Kyoto University, Kyoto 606-8502, Japan}
\email{ykawase@kurims.kyoto-u.ac.jp}

\date{\today}
\keywords{%
    sifted colimit, %
    generalized variety, %
    finitely accessible category%
}
\thanks{%
The author is grateful to his supervisor, Masahito Hasegawa, for his support. This work was supported by JSPS KAKENHI Grant Number JP24KJ1462.%
}
\subjclass[2020]{%
    18C35
}

\begin{document}
\begin{abstract}
    We give a negative answer to the open problem posed by Ad\'{a}mek and Rosick\'{y} in 2001, asking whether the cocompletion of any small category under sifted colimits is finitely accessible.
    We will construct an infinite family of counterexamples.
\end{abstract}

\maketitle
\section{Introduction}
Locally finitely presentable categories provide a categorical framework for structures built from finitely presented pieces.
More precisely, a locally small category is \textit{locally finitely presentable} if it is cocomplete and has a small set of finitely presentable objects from which every object is obtained as a filtered colimit, where an object $C\in\C$ is \textit{finitely presentable} if the hom-functor $\C(C,-)$ preserves filtered colimits.
The latter condition on the existence of the small set is called \textit{finite accessibility} together with the existence of filtered colimits.

An algebraic counterpart is obtained by replacing filtered colimits with \textit{sifted colimits}.
Recall that filtered colimits are precisely colimits that commute with finite limits in $\Set$, while sifted colimits are colimits that commute with finite products in $\Set$.
A typical example of a sifted colimit is a \textit{reflexive coequalizer}, which is the coequalizer of a pair of parallel morphisms having a common section.
Replacing ``filtered'' with ``sifted'' leads to the corresponding notions of \textit{strongly finitely presentable objects} and \textit{finitary algebraic categories}.
Finitary algebraic categories are, up to equivalence, precisely many-sorted varieties, that is, the categories of models of a many-sorted finitary equational theory \cite{AdamekRosickyVitale2011algebraic}.

It is known that a category is locally finitely presentable precisely when it is finitely accessible and complete; see \cite[Corollary~2.47]{AdamekRosicky1994locally}.
Broader classes arise by requiring the existence of fewer limits.
For example, if completeness is replaced by the existence of connected limits, one obtains \textit{locally finitely multi-presentable categories} \cite{Diers1980multipresentable}; if no limits are required, one obtains finitely accessible categories.
The same relaxation on the sifted side leads from finitary algebraic categories, through \textit{multi-algebraic categories} \cite{Diers1980multialgebraic}, to \textit{generalized varieties} \cite{AdamekRosicky2001sifted}.
More generally, accessible categories with a class of limits are studied systematically in \cite{LackTendas2024accessiblelimit}.

Generalized varieties thus form the sifted analogue of finitely accessible categories.
It is known that every finitary algebraic category is locally finitely presentable \cite[Corollary~3.7]{AdamekRosicky1994locally}.
Additionally, every multi-algebraic category is locally finitely multi-presentable, since the former can be characterized by (finite product, coproduct)-sketches \cite[Lemma~1]{AdamekRosicky2001multivarieties} and the latter by (finite limit, coproduct)-sketches; see \cite[Theorem~4.32]{AdamekRosicky1994locally} and the remark below.
Whether this passage from the sifted side to the filtered side remains valid without any completeness assumption was first posed as an open problem in \cite[4.9(1)]{AdamekRosicky2001sifted}, and was later reiterated in \cite[Example~4.11(2)]{Tendas2025more}:
\begin{question*}[{\cite[4.9(1)]{AdamekRosicky2001sifted}}]
    Is every generalized variety finitely accessible?
\end{question*}\noindent

\begin{figure}[h]
    \centering
    \begin{tikzpicture}[>=Stealth, font=\small]
        \draw (0,0.5) rectangle (6.5,-4.5);
        \node[above] at (3.25,0.5) {``Sifted Side''};

        \node (gv) at (3.25,0) {generalized variety};
        \node (ma) at (3.25,-2) {multi-algebraic};
        \node (falg) at (3.25,-4) {finitary algebraic};

        \draw[->] (ma) -- (gv);
        \draw[->] (falg) -- (ma);

        \draw (8.5,0.5) rectangle (15,-4.5);
        \node[above] at (11.75,0.5) {``Filtered Side''};

        \node (facc) at (11.75,0) {finitely accessible};
        \node (lfmp) at (11.75,-2) {locally finitely multi-presentable};
        \node (lfp)  at (11.75,-4) {locally finitely presentable};
        
        \draw[->] (lfmp) -- (facc);
        \draw[->] (lfp) -- (lfmp);

        \draw[->,dashed] (gv.east) -- node[above] {?} (facc.west);
        \draw[->] (ma.east) -- (lfmp.west);
        \draw[->] (falg.east) -- (lfp.west);
    \end{tikzpicture}
    \caption{Implications between several classes of accessible categories}
    \label{fig:implications-sifted-filtered}
\end{figure}

It is already known that every generalized variety is $\aleph_1$-accessible; see \cite[Remark~4.8(2)]{AdamekRosicky2001sifted} and also \cite[Example~4.11(2)]{Tendas2025more}.
Thus, the remaining question is whether the accessibility rank can always drop from $\aleph_1$ to $\aleph_0$.

To approach this question, it is useful to work with cocompletions.
Recall that for a small category $\A$, its cocompletion under any given class of colimits can be constructed as the full subcategory of the presheaf category $\Psh(\A)$ obtained by closing the representables under colimits in that class iteratively.
Let $\Sind(\A),\Ind(\A)\subseteq\Psh(\A)$ be the cocompletions under sifted and filtered colimits, respectively.
Explicitly, a presheaf over $\A$ belongs to $\Sind(\A)$ precisely when the category of elements is sifted \cite[Theorem~2.6]{AdamekRosicky2001sifted}, just as it belongs to $\Ind(\A)$ precisely when the category of elements is filtered.

Then, a category is a generalized variety precisely when it is equivalent to $\Sind(\A)$ for some small category $\A$ \cite[Remark~3.9(1)]{AdamekRosicky2001sifted}, just as a category is finitely accessible precisely when it is equivalent to $\Ind(\A)$.
This formulation enables us to rephrase the question as follows:
\begin{question*}
    Is $\Sind(\A)$ finitely accessible for every small category $\A$?
\end{question*}

Several additional assumptions ensure an affirmative answer.
If $\A$ is finite, $\Sind(\A)$ becomes essentially finite.
Since an essentially finite category is finitely accessible if and only if it is Cauchy complete, $\Sind(\A)$ is finitely accessible, hence the answer to the question is positive in this case.
We will discuss this finite case in \cref{sec:further}.

If a small category $\A$ has finite coproducts or pullbacks, then
\begin{equation*}
    \Sind(\A) \simeq \Ind(\Rec(\A)),
\end{equation*}
where $\Rec(\A)$ is the cocompletion of $\A$ under reflexive coequalizers.
This decomposition also leads to the positive answer, showing that $\Sind(\A)$ is finitely accessible.
The finite-coproduct case first appeared in \cite[Example~2.3(2)]{AdamekRosicky2001sifted}; for a complete proof, see \cite[Corollary~7.4]{AdamekRosickyVitale2011algebraic}.
The pullback case is due to Chen \cite[Theorem~5.1]{Chen2021sifted}.

The above equivalence can fail for a small category in general, as first shown in \cite[Example~2.3(4)]{AdamekRosicky2001sifted}.
However, this does not exclude a presentation as $\Ind(\B)$ for some other small category $\B$.
Indeed, the finite accessibility of $\Sind(\D)$, where $\D$ is the category as in \cref{eg:twin-reflexive-pairs}, does not stem from the Ind--Rec decomposition.

In this paper, we answer the open problem negatively:
\begin{theorem*}
    There exists a small category $\A$ such that $\Sind(\A)$ is not finitely accessible.
\end{theorem*}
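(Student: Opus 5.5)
We construct $\A$ explicitly and prove that $\Sind(\A)$ fails finite accessibility because it has too few finitely presentable objects. The plan uses the standard criterion. A category $\mathscr{K}$ with filtered colimits is finitely accessible iff every object is a filtered colimit of finitely presentable objects. Moreover, in that case every object $X$ is the canonical filtered colimit over $\mathscr{K}_{\mathrm{fp}}\downarrow X$. So it suffices to exhibit a single object $X\in\Sind(\A)$ that is \emph{not} a filtered colimit of finitely presentable objects of $\Sind(\A)$.

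Since $\Sind(\A)$ is closed in $\Psh(\A)$ under sifted, hence filtered, colimits, filtered colimits there are computed pointwise. Representables $\yoneda A$ are therefore finitely presentable in $\Sind(\A)$. So are retracts of finite (reflexive-coequalizer-type) sifted colimits of representables. I expect the finitely presentable objects to be exactly the retracts of presheaves in $\Sind(\A)$ with a ``finite'' sifted category of elements. Call this class $\S$.

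\textbf{Choice of $\A$.} I would take $\A$ to be a small category that has neither finite coproducts nor pullbacks, so that the Ind--Rec decomposition of the introduction fails. The failure should also be robust, meaning it should not be rescued by any other presentation $\Ind(\B)$. The guiding idea is an infinite chain of ``twin reflexive pairs'' glued together, a variant of \cref{eg:twin-reflexive-pairs}. The intended effect is that the colimit $X$ of the whole chain has a sifted category of elements. Yet every finite stage of the chain has a non-sifted category of elements, because two objects lack a common cocone until the next stage is added. Concretely, I would define $\A$ as a free category on a countable graph with objects $a_n,b_n$, parallel pairs, and common sections, with suitable equations. I would then take $X$ to be the presheaf $\Colim_n$ of these partial stages, or directly the terminal-like presheaf, and check that $\El X$ is sifted. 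Siftedness amounts to nonemptiness plus connectedness of the cospan categories $\El X\downarrow(x,y)$ for pairs of elements.

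\textbf{Key steps in order.}
\begin{enumerate}
\item Characterize the finitely presentable objects of $\Sind(\A)$: show that each is a retract of some $P\in\Sind(\A)$ whose category of elements is generated by finitely many elements. The standard argument is to write $P$ as a filtered colimit of its finitely generated subpresheaves. This works only if those subpresheaves lie in $\Sind(\A)$, and for our specific $\A$ I would verify it by hand.
\item Show that for our $\A$, every such $P$ factors $X$-maps through a finite stage of the chain, and that at finite stages siftedness forces the element data to be ``bounded''.
\item Show that the canonical diagram $\S\downarrow X$ has colimit strictly smaller than $X$, or is not filtered. Either way, $X$ is not a filtered colimit of finitely presentables.
\end{enumerate}

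\textbf{Main obstacle.} The hardest step is step (1): pinning down all finitely presentable objects of $\Sind(\A)$, including retracts and Cauchy-completion phenomena. A finitely generated subpresheaf of a sifted-element presheaf need not be in $\Sind(\A)$, so the usual reduction breaks down. My first attempt would be the following. Given $P$ finitely presentable and $P\to X$, write $X$ as a sifted but non-filtered colimit and use only filtered sub-colimits of it, namely unions of stages, to force the factorization. I would design $\A$ so that each element of $X$ requires the next twin pair for its cospans to be connected. Then no finitely presentable object can map onto a cofinal part, and this gives the contradiction.
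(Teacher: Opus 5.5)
Your proposal is a plan rather than a proof. The gap is exactly where the mathematical content should be. You never fix a concrete $\A$ (``a free category on a countable graph \ldots\ with suitable equations''). You also have no mechanism that controls the finitely presentable objects of $\Sind(\A)$: you flag the first of your key steps as open yourself, and the second and third depend on it.

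Moreover, the mechanism you sketch for factoring maps through finite stages is set up the wrong way round. You want every finite stage of your chain to have a \emph{non-sifted} category of elements. But a presheaf lies in $\Sind(\A)$ precisely when its category of elements is sifted, so those stages are not objects of $\Sind(\A)$. Finite presentability \emph{in} $\Sind(\A)$ therefore tells you nothing about factoring a map $P\to X$ through them. It only applies to filtered colimits of diagrams that live inside $\Sind(\A)$.

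The missing idea, as the paper carries it out, is to use chains whose stages lie in $\Sind(\A)\setminus\Ind(\A)$, and to use two of them.
\begin{enumerate}
\item The paper takes objects $(X,n)\in\Obj\S\times\bN$ and morphisms $(s,F)$, with $F$ a finite nonempty set of ``labels'' satisfying $n\le\max F$. It sets $P_S=\Colim(\A_S\to\A\to\Psh(\A))$.
\item For infinite $S$ the subcategory $\A_S$ is sifted, so $P_S\in\Sind(\A)$. A non-coequalizable pair in $\S$ shows $P_S\notin\Ind(\A)$.
\item The terminal presheaf $P_\bN$ is the colimit of two chains $(P_{U_m})$ and $(P_{V_n})$ in $\Sind(\A)$ with every $U_m\cap V_n$ finite. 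Hence each finitely presentable $P$ admits a span $P_{U_m}\leftarrow P\to P_{V_n}$.
\item The key lemma, \cref{stat:span-implies-represn}, composes with $P_U\to Q_U$, where $Q_U(A)=\Pfin(\bN\setminus U)$ records the labels outside $U$. This shows that all morphisms out of a fixed element of $\El P$ have labels in one finite set $H$, hence bounded level.
\item An element of maximal level admits only identities out of it, and \cref{stat:max_is_teminal_in_sifted} makes it terminal. So $P$ is representable.
\end{enumerate}
Once ``finitely presentable'' means ``representable'', any $P_S$ with $S$ infinite witnesses the failure of finite accessibility.

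A single chain would not suffice even in this setting. A factorization through one $P_{U_m}$ only bounds $F\setminus U_m$, and indeed $P_{U_m}$ maps to itself without being representable. It is the second, almost disjoint chain that bounds $F$ itself.
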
\noindent
The category $\A$ witnessing the theorem will be constructed in \cref{def:construction-of-A}.
In fact, our construction yields not just a single counterexample, but an infinite family of them.
The construction starts from an arbitrary small sifted category $\S$ that is not filtered, for example, the walking reflexive pair.
We carry out the construction and prove the theorem in \cref{sec:counterexample}.

Our strategy is to construct $\A$ so that the only finitely presentable objects of $\Sind(\A)$ are the representables, while some objects lie outside their closure under filtered colimits.
The central idea is to express the terminal presheaf as a chain colimit in $\Sind(\A)$ in two ``orthogonal'' ways.
Finite presentability forces the unique map from any finitely presentable object to the terminal object to factor through some stage of each chain.
We arrange the two chains so that these simultaneous factorizations impose a strong restriction, which is illustrated in \cref{fig:two-orthogonal-chains}.
The key ingredient is \cref{stat:span-implies-represn}.

\section{Counterexamples}\label{sec:counterexample}
We fix a small sifted but non-filtered category $\S$; for example, the walking reflexive pair, which is given by
\begin{equation*}
    \begin{tikzcd}
        1\ar[r,shift left=3,"d_0"]\ar[r,shift right=3,"d_1"']
            &
            0\ar[l,"s"{description}]
    \end{tikzcd}
    \quad\text{with}\quad
    d_i\circ s = \id_0
    \quad (i=0,1).
\end{equation*}
Note that the following definition depends on $\S$.
\begin{construction}[The category $\A$]\label{def:construction-of-A}
    Let $\A$ denote the category defined as follows.
    \begin{description}[labelindent=1em]
        \item[Object]
            $\Obj\A\coloneq \{(X,n)\mid \text{$X\in\S$ and $n\ge 0$}\} = (\Obj\S)\times\bN$.
        \item[Morphism]
            \begin{gather*}
                \A((X,m),(Y,n))\coloneq
                \begin{cases}
                    \left\{
                        (s,F)
                        \,\middle|\,
                        \begin{aligned}
                            &\text{$X\arr(s)Y$ in $\S$, $F\in\Pfin(\bN)\setminus\{\emptyset\}$}
                            \\
                            &\text{such that $n\le\max F$}
                        \end{aligned}
                    \right\}
                        &
                        \text{if $m<n$;}
                    \\
                    \left\{
                        (\id_X,\emptyset)
                    \right\}
                        &
                        \text{if $X=Y$ and $m=n$;}
                    \\
                    \emptyset
                        &
                        \text{otherwise.}
                \end{cases}
            \end{gather*}
            Here, $\Pfin(\bN)$ denotes the set of finite subsets of $\bN$, and $\max F$ denotes the largest element of $F$.
            Define $\id_{(X,m)}\coloneq (\id_X,\emptyset)$.
        \item[Composition]
            The composite of
            \begin{equation*}
                (X_0,n_0)\arr((s_1,F_1))[][2] (X_1,n_1)\arr((s_2,F_2))[][2] (X_2,n_2)
                \incat{\A}
            \end{equation*}
            is defined to be $(s_2\circ s_1, F_1\cup F_2)$.
            This actually defines a morphism $(X_0,n_0)\arr() (X_2,n_2)$.
            Indeed, if the second morphism is non-identity, then $n_2\le\max F_2\le\max(F_1\cup F_2)$; otherwise, there is nothing to check.
            Associativity and the unit laws follow immediately from those in $\S$ and from those of union, with $\emptyset$ as its unit.\qedhere
    \end{description}
\end{construction}

\begin{notation}[The subcategories $\A_S$]
    For a subset $S\subseteq\bN$, let $\A_S$ denote the subcategory of $\A$ with the same objects as $\A$ where a morphism $(X,m)\arr((s,F))[][2](Y,n)$ in $\A$ belongs to $\A_S$ if $F\subseteq S$.
\end{notation}

\begin{notation}[The presheaves $P_S$]\label{def:P_S}
    For a subset $S\subseteq\bN$, let $P_S$ denote the presheaf obtained by taking the colimit of the diagram $\A_S\arr \A\arr(\yoneda)[hook] \Psh(\A)$, the composite of the inclusion and the Yoneda embedding.
    Explicitly, for each object $A\in\A$, an element of $P_S(A)$ is an equivalence class $[f]_S$ of a morphism $A\arr(f)\cdot$ in $\A$ under the smallest equivalence relation $\sim_S$ satisfying the following:
    \begin{center}
        If $k\circ f=g$ for some $k$ belonging to $\A_S$, then $f\sim_S g$.
    \end{center}
    Note that the presheaf $P_\bN$ becomes a terminal presheaf.
\end{notation}

\begin{lemma}\label{stat:isFiltered-A_S}
    For every infinite $S\subseteq\bN$, the category $\A_S$ is sifted.
\end{lemma}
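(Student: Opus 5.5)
The plan is to verify the standard characterisation of siftedness directly: a small category $\C$ is sifted if and only if it is nonempty and, for every pair of objects $A,B\in\C$, the category $\Cospan_\C(A,B)$ of cospans $A\arr C\rra B$ is nonempty and connected. I will use this criterion both for $\S$, which is sifted by assumption, and for $\A_S$. Throughout, $S$ is infinite, so for every $N$ there is some $k\in S$ with $k\ge N$. The only constraints on a morphism $(X,m)\arr((s,F))[][2](Y,n)$ of $\A_S$ with $m<n$ are $\emptyset\ne F\subseteq S$ and $n\le\max F$. Since composition takes unions of labels, a composite of morphisms whose labels are contained in a set $L$ has label contained in $L$ as well.

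\emph{Nonemptiness.} $\A_S$ has the same objects as $\A$, so it is nonempty. Given $(X,m),(Y,n)$, siftedness of $\S$ gives a cospan $X\arr(s)Z\rra(t)Y$ in $\S$. Choose $N>m,n$ and $k\in S$ with $k\ge N$. Then
\[
(X,m)\arr((s,\{k\}))[][2](Z,N)\rra((t,\{k\}))[][2](Y,n)
\]
is a cospan in $\A_S$.

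\emph{Connectedness.} The key trick is a normalisation step. Fix a finite set $L\subseteq S$ and a level $N$ with $N\le\max L$. Call a cospan in $\A_S$ \emph{normal} if its apex lies at level $N$ and both legs are labelled exactly by $L$.

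Take any cospan $(X,m)\arr((s,F))[][2](Z,p)\rra((t,G))[][2](Y,n)$, where either leg may be an identity with label $\emptyset$. Suppose $N>p$ and $L\supseteq F\cup G$. Postcomposing with $(Z,p)\arr((\id_Z,L))[][2](Z,N)$ is a morphism in $\Cospan_{\A_S}$ to the cospan with legs $(s,F\cup L)=(s,L)$ and $(t,G\cup L)=(t,L)$. Because the legs are absorbed into $L$, this target is normal and depends only on the underlying $\S$-cospan $(s,t)$.

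Now let two cospans in $\A_S$ over $(X,m),(Y,n)$ be given. Their underlying $\S$-cospans are connected in $\Cospan_\S(X,Y)$ by a zigzag of cospans $(s_i,t_i)$ with apexes $Z_i$, with the zigzag steps given by maps $u$ between consecutive apexes. Choose
\begin{itemize}
\item a level $N$ larger than the two given apex levels and than $m,n$, so that it is also above the levels involved in the zigzag;
\item a finite $L\subseteq S$ containing all labels of the two given cospans together with an element $\ge N+1$.
\end{itemize}
Let $c_i$ be the normal lift of $(s_i,t_i)$, that is, its legs $(s_i,L),(t_i,L)$ into $(Z_i,N)$. By the normalisation step, each given cospan is connected to the normal lift of its underlying $\S$-cospan.

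It remains to connect consecutive normal lifts. Since $\A_S$ has no non-identity morphisms within one level, a step $u\colon Z_i\arr Z_{i+1}$ with $u s_i=s_{i+1}$ and $u t_i=t_{i+1}$ cannot be lifted at level $N$. I instead pass through level $N+1$:
\begin{itemize}
\item $c_i$ maps via $(u,L)\colon (Z_i,N)\arr(Z_{i+1},N+1)$ to the cospan with legs $(us_i,L),(ut_i,L)$;
\item $c_{i+1}$ maps via $(\id,L)\colon (Z_{i+1},N)\arr(Z_{i+1},N+1)$ to the cospan with legs $(s_{i+1},L),(t_{i+1},L)$.
\end{itemize}
These two targets are equal, so $c_i$ and $c_{i+1}$ are connected. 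Chaining these connections gives connectedness of $\Cospan_{\A_S}$.

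The main obstacle is that composing morphisms only enlarges labels, and $\A_S$ has no non-identity maps within a level. So one cannot naively transport a zigzag from $\S$; the choice of a single large label set $L$ absorbing everything, together with moving up one level for each zigzag step, is what resolves both issues. The infinitude of $S$ is used exactly to guarantee $\max L\ge N+1$.
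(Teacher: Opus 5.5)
Your proof is correct and is essentially the paper's argument: absorb all labels into a single finite $L\subseteq S$ with large enough maximum, push each given cospan up via $(\id,L)$, and lift the zigzag in $\Cospan_{\S}$ using two levels. The only difference is cosmetic: the paper writes the zigzag in alternating form $W_{k-1}\leftarrow V_k\to W_k$ and places the $V_k$ at level $M$ and the $W_k$ at level $N$ (with $M<N$ in $S$), whereas you put every apex at level $N$ and insert an extra apex at level $N+1$ for each step.
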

\begin{proof}
    Non-emptiness is trivial.
    Let $(X_0,n_0)$ and $(X_1,n_1)$ be objects in $\A_S$.
    We have to show that these extend to a cospan.
    Since $\S$ is sifted, there exists a cospan $X_0\arr(s_0) Y\rra(s_1) X_1$ in $\S$.
    Since $S$ is infinite, we can take a sufficiently large $N\in S$ and obtain a cospan $(X_i,n_i)\arr((s_i,\{N\}))[][3] (Y,N)$ in $\A_S$.

    Suppose that we are given two cospans $(X_i,n_i)\arr((s_i,F_i))[][3] (Y,p)$ and $(X_i,n_i)\arr((t_i,G_i))[][3] (Z,q)$ in $\A_S$, where $i=0,1$.
    Since $\S$ is sifted, there are cospans $X_i\arr(u_{k,i}) V_k$, and a zig-zag $W_{k-1}\rra(a_k) V_k \arr(b_k) W_k$ $(1\le k \le l)$ such that
    \begin{gather*}
        W_0 = Y, \quad W_l = Z; \\
        a_1\circ u_{1,i} = s_i \quad (i=0,1); \\
        b_l\circ u_{l,i} = t_i \quad (i=0,1); \\
        b_k\circ u_{k,i} = a_{k+1}\circ u_{k+1,i} \quad (1\le k < l;~i=0,1)
        \incat{\S}.
    \end{gather*}
    Since $S$ is infinite, we can take some $M,N\in S$ such that $p,q<M<N$.
    Let $L\coloneq F_0\cup F_1 \cup G_0 \cup G_1 \cup \{M,N\}$.
    Then, the following diagram commutes in $\A_S$ for each $i=0,1$:
    \begin{equation*}
        \begin{tikzcd}[scriptsize]
            (W_0,N)
                &
                (V_1,M)\ar[l,"{(a_1,L)}"']\ar[r,"{(b_1,L)}"]
                    &
                    (W_1,N)
                        &
                        (V_2,M)\ar[l,"{(a_2,L)}"']\ar[r,"{(b_2,L)}"]
                            &
                            \cdots
                                &
                                (V_l,M)\ar[l,"{(a_l,L)}"']\ar[r,"{(b_l,L)}"]
                                    &
                                    (W_l,N)
            \\
            (Y,p)\ar[u,"{(\id,L)}"]
                &
                    &
                        &
                            &
                                &
                                    &
                                    (Z,q)\ar[u,"{(\id,L)}"']
            \\[2em]
                &
                    &
                        &
                        (X_i,n_i)
                        \ar[lllu,"{(s_i,F_i)}"]
                        \ar[lluu,"{(u_{1,i},L)}"]
                        \ar[uu,"{(u_{2,i},L)}"]
                        \ar[rruu,"{(u_{l,i},L)}"']
                        \ar[rrru,"{(t_i,G_i)}"']
                            &
                                &
                                    &
        \end{tikzcd}
    \end{equation*}
    This shows that the cospans $(s_i,F_i)_{i=0,1}$ and $(t_i,G_i)_{i=0,1}$ are connected to each other; hence $\A_S$ is sifted.
\end{proof}

\begin{proposition}\label{stat:inSind-notinInd-P_S}
    For every infinite $S\subseteq\bN$, $P_S\in\Sind(\A)\setminus\Ind(\A)$.
\end{proposition}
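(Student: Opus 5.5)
We need to show two things: $P_S\in\Sind(\A)$ and $P_S\notin\Ind(\A)$.

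For membership in $\Sind(\A)$, $P_S$ is by definition the colimit in $\Psh(\A)$ of the composite $\A_S\hookrightarrow\A\xrightarrow{\yoneda}\Psh(\A)$. By \cref{stat:isFiltered-A_S}, $\A_S$ is sifted when $S$ is infinite. Since $\Sind(\A)$ contains the representables and is closed in $\Psh(\A)$ under sifted colimits, $P_S\in\Sind(\A)$ follows at once. Alternatively, one can check that the category of elements of $P_S$ is sifted, using \cite[Theorem~2.6]{AdamekRosicky2001sifted}, but the colimit description is more direct.

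For $P_S\notin\Ind(\A)$, I would use the criterion that a presheaf lies in $\Ind(\A)$ iff its category of elements is filtered. Equivalently, every morphism from a finite diagram of representables factors coherently through some representable; in particular, every parallel pair of maps $\yoneda A\rightrightarrows P_S$ that are coequalized in $P_S$ must be coequalized by a single map to a representable.

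The plan is to exploit the non-filteredness of $\S$. Choose a finite diagram in $\S$ with no cocone; it suffices to take a parallel pair $s,t\colon X\rightrightarrows Y$ not equalized by any $u\colon Y\to Z$, since $\S$ is sifted and hence already has cospans. For the walking reflexive pair, take $d_0,d_1$. Fix $N\in S$ and consider the elements $[\id_{(Y,N)}]_S\in P_S(Y,N)$. The two morphisms $(s,\{N\}),(t,\{N\})\colon(X,n)\to(Y,N)$ with $n<N$ both lie in $\A_S$, so $[(s,\{N\})]_S=[(t,\{N\})]_S$ in $P_S(X,n)$. Indeed, both are related to the identity-class of their common domain in the colimit over $\A_S$. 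In the category of elements of $P_S$, this gives two parallel morphisms between $((X,n),x)$ and $((Y,N),y)$.

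Filteredness would then demand some $k=(u,F)\colon(Y,N)\to(Z,q)$ with $k\circ(s,\{N\})=k\circ(t,\{N\})$, i.e.\ $(us,F\cup\{N\})=(ut,F\cup\{N\})$. That forces $us=ut$ in $\S$, which is impossible by the choice of $s,t$. Identities do not help either, because $s\neq t$.

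The delicate point is verifying that the element condition holds, i.e.\ that these really are morphisms of the category of elements over a common element of $P_S(Y,N)$ and $P_S(X,n)$. The target element is $y=[\id]_S$, and the pullbacks $y\cdot(s,\{N\})=[(s,\{N\})]_S$ and $y\cdot(t,\{N\})=[(t,\{N\})]_S$ must agree. This holds because each is $\sim_S$-equivalent to $[\id_{(X,n)}]_S$, as $(s,\{N\})\circ\id=(s,\{N\})$ with $(s,\{N\})\in\A_S$. I expect the main subtlety to be this step together with correctly invoking the filtered criterion. The factorization must be through a representable with equality of the actual composites in $\A$, and since composition in $\A$ records the $\S$-component faithfully, non-filteredness of $\S$ is transferred.
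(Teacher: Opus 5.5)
Your proposal is correct and matches the paper's proof. Membership in $\Sind(\A)$ comes from the siftedness of $\A_S$. Non-membership in $\Ind(\A)$ comes from lifting a never-coequalized pair $s,t$ in $\S$ to $(s,\{N\}),(t,\{N\})$ between identity classes in $\El P_S$; the $\S$-component of composites then blocks any coequalizer. The paper uses $(X,0)\to(Y,1)$ and phrases this last step via the projection $\El P_S\to\A\to\S$. One detail to state explicitly: choose $N\ge 1$ in $S$, which is possible since $S$ is infinite, so that some $n<N$ exists.
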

\begin{proof}
    By \cref{stat:isFiltered-A_S}, $P_S$ is a sifted colimit of representables, hence $P_S\in\Sind(\A)$.
    The rest we have to show is $P_S\notin\Ind(\A)$, equivalently, the category of elements $\El P_S$ is not filtered.
    Since $\S$ is sifted and non-filtered, there exists a pair $(s_0,s_1)\colon X\arr Y$ of parallel morphisms in $\S$ that is never coequalized.
    Since $S$ is infinite, we can take $1\le N\in S$.
    Consider objects $A\coloneq (X,0)$, $B\coloneq (Y,1)$, and morphisms $f_i\coloneq (s_i,\{N\}) \colon A\arr B$ in $\A$.
    Since both $f_0$ and $f_1$ lie in $\A_S$, we have $[f_0]_S = [\id_A]_S = [f_1]_S$, where each $[\cdot]_S$ are equivalence classes as in \cref{def:P_S}.
    Thus, we obtain a pair of parallel morphisms $(A,[\id_A]_S)\arr(f_i) (B,[\id_B]_S)$ in $\El P_S$.
    Moreover, this pair still cannot be coequalized in $\El P_S$ because of the projection functor $\El P_S \arr \A \arr \S$.
    This shows that $\El P_S$ is not filtered.
\end{proof}

\begin{lemma}\label{stat:max_is_teminal_in_sifted}
    Let $\C$ be a sifted category, and let $Z\in\C$ be an object such that every morphism from $Z$ is the identity.
    Then, $Z$ is the terminal object in $\C$.
\end{lemma}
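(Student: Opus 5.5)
We need to show that for every object $C\in\C$ there is exactly one morphism $C\arr Z$. The plan is to use two consequences of siftedness: any two objects admit a cospan (nonemptiness and connectedness of cospan categories), and any two cospans on the same pair of objects are connected by a zig-zag of cospan morphisms.

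\emph{Existence.} Given $C\in\C$, siftedness provides a cospan $C\arr(u) W \rra(v) Z$. By hypothesis every morphism out of $Z$ is an identity, so $v=\id_Z$ and $W=Z$. Hence $u\colon C\arr Z$ is a morphism into $Z$.

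\emph{Uniqueness.} Let $f,g\colon C\arr Z$. Consider the two cospans $(f,\id_Z)$ and $(g,\id_Z)$ from the pair $(C,Z)$ to $Z$. Siftedness says the category of cospans over $(C,Z)$ is connected, so these two objects are joined by a zig-zag of cospans $(C\arr(p_k) W_k \rra(q_k) Z)$. The morphisms between consecutive cospans are maps $h\colon W_k\arr W_{k'}$ commuting with both legs.

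The key observation is that every cospan in this category has apex $Z$ and second leg $\id_Z$. Indeed, $q_k$ is a morphism out of $Z$, so it is the identity. Consequently every connecting morphism $h\colon Z\arr Z$ is also a morphism out of $Z$, hence $h=\id_Z$. Compatibility of $h$ with the first legs then forces the first legs of adjacent cospans to coincide. Propagating along the zig-zag gives $f=g$.

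The only mildly delicate point is this last step: it relies on the hypothesis applying to \emph{all} morphisms with domain $Z$, including endomorphisms, and not merely to morphisms into other objects. Everything else is a direct unpacking of the definition of siftedness as used in the proof of \cref{stat:isFiltered-A_S}.
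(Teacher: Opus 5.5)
Your proof is correct and matches the paper's argument. The paper observes that every cospan on $(X,Z)$ has the form $(f,\id_Z)$ and every morphism between such cospans is $\id_Z$, so the cospan category is the discrete category on $\C(X,Z)$, and siftedness (nonempty and connected) then forces exactly one morphism $X\to Z$; you unpack this same observation into separate existence and uniqueness steps.
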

\begin{proof}
    Let $X\in\C$ be an arbitrary object.
    By the assumption on $Z$, every cospan from $X$ and $Z$ is necessarily of the form $(f,\id)$.
    Moreover, every morphism between two such cospans is the identity on $Z$.
    Thus, the category $\Cospan(X,Z)$ of cospans from $X$ and $Z$ is isomorphic to the discrete category of morphisms $X\arr Z$.
    However, this category must be connected since $\C$ is sifted.
    Therefore, there exists a unique morphism $X\to Z$.
\end{proof}

The following lemma plays a crucial role in providing a counterexample.
\begin{lemma}\label{stat:span-implies-represn}
    Let $S,T \subseteq\bN$ be infinite subsets such that the intersection $S\cap T$ is finite.
    Suppose that we are given $P\in\Sind(\A)$ and a span $P_S\rra P\arr P_T$ in $\Sind(\A)$.
    Then, the presheaf $P$ is representable.
\end{lemma}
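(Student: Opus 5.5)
The plan is to show that the category of elements $\El P$, which is sifted because $P\in\Sind(\A)$, has a terminal object $(A,y)$. Then $P$ is a colimit of representables over a category with a terminal object, so $P\cong\yoneda A$ is representable. To find the terminal object I will use \cref{stat:max_is_teminal_in_sifted}. It suffices to produce an element $(A,y)$ of $\El P$ from which every morphism is the identity.

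\emph{Step 1: an invariant of elements of $P_S$.} Take $[f]_S\in P_S(A)$ with $f=(s,F)$, and set $\phi_S([f]_S)\coloneq F\setminus S$. This is well defined. Indeed, the generating relation replaces $f$ by $k\circ f=(ts,F\cup K)$ with $K\subseteq S$, and this does not change $F\setminus S$. The invariant behaves well under restriction. If $k=(t,K)\colon A\to B$ and $[h]_S\in P_S(B)$, then $P_S(k)[h]_S=[h\circ k]_S$, and so
\begin{equation*}
    \phi_S(P_S(k)[h]_S)=(K\cup F_h)\setminus S\supseteq (K\setminus S)\cup\phi_S([h]_S).
\end{equation*}
For an element $x\in P(A)$, write $\alpha\colon P\to P_S$ and $\beta\colon P\to P_T$ for the legs of the span. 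I then define the finite sets $\sigma(x)\coloneq\phi_S(\alpha x)$ and $\tau(x)\coloneq\phi_T(\beta x)$.

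\emph{Step 2: bounding outgoing morphisms.} Let $k=(t,K)\colon(A,x)\to(B,y)$ be a non-identity morphism in $\El P$, so that $x=P(k)y$. Naturality of $\alpha,\beta$ together with Step 1 gives $\sigma(y)\subseteq\sigma(x)$ and $K\setminus S\subseteq\sigma(x)$, and similarly $\tau(y)\subseteq\tau(x)$ and $K\setminus T\subseteq\tau(x)$. Hence
\begin{equation*}
    K\subseteq\sigma(x)\cup\tau(x)\cup(S\cap T).
\end{equation*}
This right-hand side is a finite set, by the hypothesis that $S\cap T$ is finite. Call its maximum (or $0$) $N(x)$. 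Since $k$ is non-identity, the level of $B$ is at most $\max K\le N(x)$.

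Because $\sigma$ and $\tau$ can only shrink along such morphisms, $N(y)\le N(x)$. So every element reachable from $(A,x)$ by a finite chain of morphisms has level at most $\max(N(x),\text{level of }A)$.

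\emph{Step 3: a maximal element.} Fix any element $x_0$; one exists since $\El P$ is nonempty. Non-identity morphisms in $\A$ strictly increase the level, and by Step 2 the levels reachable from $x_0$ are bounded. So among the elements reachable from $(A_0,x_0)$ I can choose one, $(A,y)$, of maximal level. Any non-identity morphism out of $(A,y)$ would reach a strictly higher level, and this is impossible. Morphisms between objects of the same level are identities by the definition of $\A$. Therefore every morphism out of $(A,y)$ is the identity, and \cref{stat:max_is_teminal_in_sifted} makes $(A,y)$ terminal in $\El P$. This proves $P\cong\yoneda A$.

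The main obstacle is Step 1 and the first part of Step 2. The argument relies on $F\setminus S$ being an honest invariant of the classes $\sim_S$, and on the set $K$ of a morphism being controlled simultaneously from both sides of the span. Controlling $K$ from both sides is exactly where the finiteness of $S\cap T$ enters. Everything else is bookkeeping with levels.
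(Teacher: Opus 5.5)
Your proof is correct and is essentially the paper's argument. Your invariant $\phi_S([f]_S)=F\setminus S$ is exactly the paper's comparison map $P_S\to Q_S$ with $Q_S(A)=\Pfin(\bN\setminus S)$, your bound $K\subseteq\sigma(x)\cup\tau(x)\cup(S\cap T)$ is the paper's $F\subseteq H$, and choosing a maximal-level element and then applying \cref{stat:max_is_teminal_in_sifted} is identical; the only differences are cosmetic (you check well-definedness directly on $\sim_S$-classes, and tracking $N(y)\le N(x)$ along chains is unnecessary because composites out of $(A,x)$ are again morphisms out of $(A,x)$).
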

\begin{proof}
    We first define a presheaf $Q_U\in\Psh(\A)$ for an arbitrary subset $U\subseteq\bN$ as follows.
    For every object $A\in\A$, let $Q_U (A) \coloneq \Pfin(\bN\setminus U)$, the set of all finite subsets of $\bN\setminus U$.
    For a morphism $f=(s,F)$ in $\A$, define a map $Q_U(f)\colon\Pfin(\bN\setminus U)\arr \Pfin(\bN\setminus U)$ by $K\arr[mapsto] (F\setminus U)\cup K$.

    We next construct a natural transformation $P_U\arr Q_U$ as follows.
    For each object $A\in\A_U$, the element $\emptyset\in Q_U(A)$ determines a natural transformation $\yoneda A\arr Q_U$ by the Yoneda lemma.
    These are compatible with the morphisms in $\A_U$.
    Indeed, if $f=(s,F)$ is a morphism in $\A_U$, then $F\subseteq U$, and hence $Q_U(f)(\emptyset) = (F\setminus U) \cup \emptyset = \emptyset$.
    Thus, by the universal property of the colimit, we obtain $P_U\arr Q_U$ in $\Psh(\A)$.

    Composing the given morphisms $P\arr P_S$ and $P\arr P_T$ with the natural transformations constructed above, we obtain a span $Q_S \rra(\alpha) P \arr(\beta) Q_T$ in $\Psh(\A)$.
    Since the category of elements $\El P$ is sifted, it is non-empty; hence there exists at least one element $x\in P(A)$ with $A\in\A$.
    Let
    \begin{equation*}
        H\coloneq (S\cap T) \cup \alpha_A(x) \cup \beta_A (x) \quad\subseteq\bN.
    \end{equation*}
    Let $(A,x)\arr(f) (B,y)$ be an arbitrary morphism from $(A,x)$ in $\El P$.
    Express $A=(X,m)$, $B=(Y,n)$, and $f=(s,F)$, respectively.
    By the naturality of $\alpha$ at $f$ as shown below, we have $F\setminus S \subseteq \alpha_A(x)$.
    Similarly, the naturality of $\beta$ at $f$ implies $F\setminus T \subseteq \beta_A(x)$.
    \begin{equation*}
        \begin{tikzcd}
            P(A)\ar[ddd,"\alpha_A"']
                &[-4em]
                    &
                        &
                            &[-4em]
                            P(B)\ar[llll,"P(f)"']\ar[ddd,"\alpha_B"]
            \\[-1.5em]
                &
                x\ar[d,mapsto]
                    &
                        &
                        y\ar[ll,mapsto]\ar[d,mapsto]
                            &
            \\
                &
                \alpha_A(x)
                    &
                    (F\setminus S)\cup\alpha_B(y)\ar[l,phantom,"="]
                        &
                        \alpha_B(y)\ar[l,mapsto]
                            &
            \\[-1.5em]
            Q_S(A)
                &
                    &
                        &
                            &
                            Q_S(B)\ar[llll,"Q_S(f)"]
        \end{tikzcd}
    \end{equation*}
    Then, we have $F\subseteq H$.
    The set $H$ is finite, since $S\cap T$, $\alpha_A(x)$, and $\beta_A(x)$ are all finite; moreover it is independent of the choice of $f$.
    If $f$ is non-identity, then $n\le \max F\le \max H$; hence, there are only finitely many possible values of $n$.
    Therefore, among all morphisms from $(A,x)$ in $\El P$, we can choose one
    \[
        (A,x)\arr() (B_\mathrm{max},y_\mathrm{max}),
        \qquad B_\mathrm{max}=(Y_\mathrm{max},n_\mathrm{max}),
    \]
    such that $n_\mathrm{max}$ is as large as possible.
    By the choice of $n_\mathrm{max}$, every morphism from $(B_\mathrm{max},y_\mathrm{max})$ in $\El P$ is the identity.
    Then, \cref{stat:max_is_teminal_in_sifted} implies that $\El P$ has a terminal object, hence $P$ is representable.
\end{proof}

\begin{proposition}\label{stat:fp_is_represn}
    Every finitely presentable object in $\Sind(\A)$ is representable.
\end{proposition}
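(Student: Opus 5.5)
We will use the strategy from the introduction: write the terminal presheaf $P_\bN$ as a chain colimit in $\Sind(\A)$ in two ``orthogonal'' ways, and then apply \cref{stat:span-implies-represn}.

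\textbf{Choosing the chains.} Fix a partition $\bN = S\sqcup T$ into two infinite sets, for instance the evens and the odds. For each $k$, put $S_k\coloneq S\cup\{0,\dots,k\}$ and $T_k\coloneq T\cup\{0,\dots,k\}$. Each $S_k$ and each $T_k$ is infinite, so $P_{S_k},P_{T_k}\in\Sind(\A)$ by \cref{stat:inSind-notinInd-P_S}. An inclusion $S_k\subseteq S_{k+1}$ gives $\A_{S_k}\subseteq\A_{S_{k+1}}$, and hence a comparison map $P_{S_k}\to P_{S_{k+1}}$. We claim that
\begin{equation*}
    \Colim_k P_{S_k}\cong P_\bN \cong \Colim_k P_{T_k}.
\end{equation*}
Colimits of chains are filtered, hence sifted, so $\Sind(\A)$ is closed under them in $\Psh(\A)$ and the colimit may be computed pointwise in $\Psh(\A)$. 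Pointwise, $P_{S_k}(A)$ is the quotient of morphisms out of $A$ by $\sim_{S_k}$. The relations $\sim_{S_k}$ increase with $k$, and their union is $\sim_\bN$: any single generating step $k\circ f=g$ involves a morphism $(s,F)$ with $F$ finite, so $F\subseteq S_k$ for $k$ large. The colimit is therefore the quotient by $\sim_\bN$, which is $P_\bN$, the terminal presheaf. The argument for the $T_k$ is identical.

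\textbf{Factoring through both chains.} Let $P\in\Sind(\A)$ be finitely presentable. The unique map $P\to P_\bN$ factors through some $P_{S_k}$, and likewise through some $P_{T_{k'}}$. Enlarging to $j=\max(k,k')$, we obtain a span $P_{S_j}\leftarrow P\to P_{T_j}$ in $\Sind(\A)$. Both $S_j$ and $T_j$ are infinite, and $S_j\cap T_j\subseteq\{0,\dots,j\}$ is finite. \cref{stat:span-implies-represn} then shows that $P$ is representable.

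\textbf{Main obstacle.} The only delicate step is the identification of the chain colimit with $P_\bN$. One must check that the equivalence relations genuinely form an increasing union. This could fail if $\sim_\bN$ needed infinitely many steps. It does not, because each zig-zag witnessing $f\sim_\bN g$ is finite and uses finitely many finite label sets. It is also worth noting that finite presentability here is taken in $\Sind(\A)$, and this is legitimate because the chain colimit computed in $\Psh(\A)$ already lies in $\Sind(\A)$.
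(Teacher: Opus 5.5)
Your proof is correct and follows essentially the same route as the paper: you write $P_\bN$ as the colimit of two chains $P_{S\cup\{0,\dots,k\}}$ and $P_{T\cup\{0,\dots,k\}}$, where the paper uses $U_m=E\cup\{1,3,\dots,2m-1\}$ and $V_n=\{0,2,\dots,2n-2\}\cup O$. A finitely presentable $P$ then maps into a member of each chain, and since those two index sets have finite intersection, \cref{stat:span-implies-represn} applies; your passage to a common index $j$ is harmless but unnecessary, since $S_k\cap T_{k'}$ is already finite.
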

\begin{proof}
    Let $E\subseteq\bN$ be the subset of even numbers and $O\subseteq\bN$ be the subset of odd ones.
    For each $n\in\bN$, let
    \begin{gather*}
        U_n \coloneq E \cup \{2k+1\mid 0\le k< n\} ~\subseteq\bN,\\
        V_n \coloneq \{2k \mid 0\le k< n\} \cup O ~\subseteq\bN.
    \end{gather*}
    Then, we obtain two chains $P_{U_0}\arr P_{U_1}\arr \cdots$ and $P_{V_0}\arr P_{V_1}\arr \cdots$ in $\Psh(\A)$; moreover, these chains lie in $\Sind(\A)$ by \cref{stat:inSind-notinInd-P_S}.
    It follows that every morphism $f=(s,F)$ in $\A$ eventually belongs to some $\A_{U_m}$ and $\A_{V_n}$, since $\bigcup_{n\in\bN} U_n = \bigcup_{n\in\bN} V_n = \bN$ and $F$ is finite by the construction of $\A$.
    Thus, we have $\Colim_{n<\omega} P_{U_n} \cong \Colim_{n<\omega} P_{V_n} \cong P_\bN$ in $\Psh(\A)$.
    These isomorphisms still hold in $\Sind(\A)$ because the full subcategory $\Sind(\A)\subseteq\Psh(\A)$ is, in general, closed under sifted colimits; in particular, chain colimits.

    Let $P\in\Sind(\A)$ be an arbitrary finitely presentable object.
    Then, since $\Colim_{n<\omega} P_{U_n} \cong \Colim_{n<\omega} P_{V_n} \cong P_\bN$ in $\Sind(\A)$, the unique morphism $P\arr P_\bN$ to the terminal object factors through $P_{U_m}$ and $P_{V_n}$ for some $m,n\in\bN$.
    Since $U_m\cap V_n$ is finite, $P$ is representable by \cref{stat:span-implies-represn}.
\end{proof}

\begin{theorem}
    The category $\Sind(\A)$ is not finitely accessible.
\end{theorem}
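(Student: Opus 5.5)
We argue by contradiction and suppose that $\Sind(\A)$ is finitely accessible. Then every object of $\Sind(\A)$ is a filtered colimit, formed in $\Sind(\A)$, of finitely presentable objects of $\Sind(\A)$. By \cref{stat:fp_is_represn}, every finitely presentable object of $\Sind(\A)$ is representable, that is, isomorphic to $\yoneda A$ for some $A\in\A$. Hence every object of $\Sind(\A)$ would be a filtered colimit, computed in $\Sind(\A)$, of representables.

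Next we transfer this colimit to $\Psh(\A)$. Filtered colimits are sifted, and $\Sind(\A)\subseteq\Psh(\A)$ is closed under sifted colimits, as was already used in the proof of \cref{stat:fp_is_represn}. Therefore a filtered colimit of representables taken in $\Sind(\A)$ agrees with the same colimit taken in $\Psh(\A)$. Any presheaf that is a filtered colimit of representables in $\Psh(\A)$ lies in $\Ind(\A)$, since $\Ind(\A)$ is by definition the closure of the representables under filtered colimits. We would thus obtain $\Sind(\A)\subseteq\Ind(\A)$.

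Now pick an infinite $S\subseteq\bN$, for instance $S=\bN$ or $S=E$. \Cref{stat:inSind-notinInd-P_S} gives $P_S\in\Sind(\A)\setminus\Ind(\A)$, which contradicts the inclusion just derived. Hence $\Sind(\A)$ is not finitely accessible.

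The only point that needs care is the passage from "filtered colimit in $\Sind(\A)$" to "object of $\Ind(\A)$", which requires that the colimit computed in $\Sind(\A)$ coincides with the one computed in $\Psh(\A)$. Closure of $\Sind(\A)$ under sifted colimits in $\Psh(\A)$ settles this. We also use that $\Ind(\A)$ is closed under isomorphism, so that "representable" up to isomorphism causes no issue. Everything else is a direct combination of \cref{stat:fp_is_represn} and \cref{stat:inSind-notinInd-P_S}.
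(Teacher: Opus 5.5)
Your proof is correct and follows the paper's argument. Like the paper, you combine \cref{stat:fp_is_represn} (every finitely presentable object is representable) with \cref{stat:inSind-notinInd-P_S} ($P_S\in\Sind(\A)\setminus\Ind(\A)$), and you additionally make explicit a step the paper leaves implicit: a filtered colimit of representables formed in $\Sind(\A)$ agrees with the one formed in $\Psh(\A)$, since $\Sind(\A)$ is closed under sifted colimits, and therefore it lies in $\Ind(\A)$.
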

\begin{proof}
    For any infinite subset $S\subseteq\bN$, we already have $P_S\in\Sind(\A)\setminus\Ind(\A)$ by \cref{stat:inSind-notinInd-P_S}.
    Then, any of $P_S$ cannot be written as a filtered colimit of finitely presentable objects in $\Sind(\A)$ by \cref{stat:fp_is_represn}, which concludes that $\Sind(\A)$ is not finitely accessible.
\end{proof}

\begin{figure}[h]
    \centering
    \begin{tikzpicture}[>={Straight Barb[scale=0.8]},x=2em,y=1.6em]
        \coordinate (SindNE) at (12.5,9.8);
        \coordinate (SindSW) at (0,-2);

        \coordinate (IndNE) at (8.5,7);
        \coordinate (IndSW) at (0.75,-1);

        \coordinate (ANE) at (6.5,5);
        \coordinate (ASW) at (1.5,0);

        \node[
            frame,
            fit=(SindSW)(SindNE),
            header={$\Sind(\A)$}
        ] {};

        \node[
            frame,
            fit=(IndSW)(IndNE),
            header={$\Ind(\A)$}
        ] {};

        \node[
            frame,
            fit=(ASW)(ANE),
            header={$\A$}
        ] {};

        \node (PU0) at (1,8) {$P_{U_0}$};
        \node (PU1) at (3,8) {$P_{U_1}$};
        \node (Hdots1) at (5,8) {$\cdots$};
        \node (PUm) at (7,8) {$P_{U_m}$};
        \node (Hdots2) at (9,8) {$\cdots$};
        \node (PN) at (10,8) {$P_{\bN}$};
        \node (Cong) at (10.75,8) {$\cong$};
        \node (T) at (11.5,8) {$1$};

        \draw[->] (PU0) -- (PU1);
        \draw[->] (PU1) -- (Hdots1);
        \draw[->] (Hdots1) -- (PUm);
        \draw[->] (PUm) -- (Hdots2);

        \node (Vdots2) at (10,7) {$\vdots$};
        \node (PVn) at (10,5) {$P_{V_n}$};
        \node (Vdots1) at (10,3) {$\vdots$};
        \node (PV1) at (10,1) {$P_{V_1}$};
        \node (PV0) at (10,-1) {$P_{V_0}$};

        \draw[->] (PVn) -- (Vdots2);
        \draw[->] (Vdots1) -- (PVn);
        \draw[->] (PV1) -- (Vdots1);
        \draw[->] (PV0) -- (PV1);

        \node (P) at (5,3) {$P$};

        \draw[->] (P) -- (PUm);
        \draw[->] (P) -- (PVn);
    \end{tikzpicture}
    \caption{The two ``orthogonal'' chains in $\Sind(\A)$}
    \label{fig:two-orthogonal-chains}
\end{figure}

\section{The cocompletion of finite categories}\label{sec:further}
In this section, we remark that the cocompletion of any finite category under sifted colimits is always finitely accessible.
\begin{definition}
    We say a category \emph{admits cospans} if for any objects $I$ and $I'$, there exists a cospan $I\arr \cdot \rra I'$.
\end{definition}

We write $\card{X}$ for the cardinality of a set $X$.
\begin{lemma}\label{stat:less-than-N-colimit}
    Let $\I\arr(D)\Set$ be a functor from a small category admitting cospans, and let $X\in\Set$ be its colimit.
    Suppose there exists $N>0$ such that for every $I\in\I$, $\card{D(I)} < N$ holds.
    Then, $\card{X} < N$ holds.
\end{lemma}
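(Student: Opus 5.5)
Since every element of the colimit $X$ is represented by an element of some $D(I)$, it suffices to show the following: any $N$ elements of $X$ can be represented, after moving along the diagram, inside a single $D(J)$. To see why this suffices, suppose $\card{X}\ge N$ and pick distinct elements $x_1,\dots,x_N\in X$, with representatives $a_k\in D(I_k)$. Because $\I$ admits cospans, induction on $k$ gives an object $J$ together with morphisms $u_k\colon I_k\to J$. Concretely, take a cospan $I_1\to J_2\leftarrow I_2$, then a cospan $J_2\to J_3\leftarrow I_3$, and so on, composing the earlier legs into $J=J_N$. Let $\iota_I\colon D(I)\to X$ denote the colimit injections. Each $x_k=\iota_{I_k}(a_k)=\iota_J(D(u_k)(a_k))$, so all $N$ elements $x_1,\dots,x_N$ lie in the image of $\iota_J\colon D(J)\to X$. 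That image has cardinality at most $\card{D(J)}<N$, which contradicts the distinctness of the $x_k$. Hence $\card{X}<N$.

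The only facts used are these. First, in $\Set$ the colimit is the quotient of $\coprod_I D(I)$, so the colimit maps are jointly surjective. Second, the compatibility $\iota_J\circ D(u)=\iota_I$ holds for every $u\colon I\to J$. Third, there is the finite iteration of the cospan property. Connectedness of $\I$ beyond cospans is not needed, and neither is any description of the equivalence relation. If $\I$ were empty then $X=\emptyset$ and $0<N$ holds trivially; the argument above needs $\I$ nonempty only when $N$ elements are chosen, which can only happen if $X$ is nonempty anyway.

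There is no real obstacle here. The one point requiring care is the inductive construction of a common cocone vertex $J$ for finitely many objects using only binary cospans, together with noting that the case of an empty $\I$ (or empty $X$) is trivial.
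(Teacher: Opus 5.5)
Your proof is correct and is essentially the paper's argument. Both proofs pick $N$ elements of $X$ with representatives, use the cospan property to obtain a common object $J$ with maps $I_k\to J$, and conclude by pigeonhole from $|D(J)|<N$; your bound on the image of $\iota_J$ and your explicit inductive construction of $J$ simply spell out details the paper leaves implicit.
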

\begin{proof}
    Take elements $[I_1,x_1],\dots,[I_N,x_N]$ in $X$ represented by $x_1\in D(I_1),\dots,x_N\in D(I_N)$.
    Since $\I$ admits cospans, we have an object $J\in\I$ and morphisms $I_k\arr(f_k)J$ $(1\le k\le N)$.
    By $\card{D(J)} <  N$, the elements $D(f_k)(x_k)$ $(1\le k\le N)$ are not distinct, hence so are $[I_k,x_k]=[J, D(f_k)(x_k)]$ $(1\le k\le N)$.
\end{proof}

The following proposition seems standard, but we include a complete proof for convenience.
The proof is based on an argument given on MathOverflow \cite{Simon2026filtered}.
\begin{proposition}\label{stat:finite-accessible}
    For a finite category $\C$, the following conditions are equivalent:
    \begin{enumerate}
        \item\label{stat:finite-accessible/ind}
            The ind-completion $\C\arr[hook]\Ind(\C)$ is an equivalence. 
        \item\label{stat:finite-accessible/finacc}
            $\C$ is finitely accessible.
        \item\label{stat:finite-accessible/acc}
            $\C$ is accessible.
        \item\label{stat:finite-accessible/filtered}
            $\C$ has filtered colimits.
        \item\label{stat:finite-accessible/cauchy}
            $\C$ is Cauchy complete.
    \end{enumerate}
\end{proposition}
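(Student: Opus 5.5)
I will prove the implications in a cycle, \cref{stat:finite-accessible/ind}$\Rightarrow$\cref{stat:finite-accessible/finacc}$\Rightarrow$\cref{stat:finite-accessible/acc}$\Rightarrow$\cref{stat:finite-accessible/filtered}$\Rightarrow$\cref{stat:finite-accessible/cauchy}$\Rightarrow$\cref{stat:finite-accessible/ind}.

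The first two implications are formal. If $\C\simeq\Ind(\C)$ with $\C$ small, then $\C$ is finitely accessible by definition, and finitely accessible categories are accessible. For \cref{stat:finite-accessible/acc}$\Rightarrow$\cref{stat:finite-accessible/filtered}, an accessible category has $\lambda$-filtered colimits for some $\lambda$. I then need to reduce arbitrary filtered colimits to $\lambda$-filtered ones. Since $\C$ is finite, a small filtered diagram $D\colon\I\arr\C$ can be replaced by a finite one. I will use the standard fact that every filtered category has a cofinal subcategory that is a directed poset. Because $\C$ has finitely many objects and morphisms, some object and some ``eventual'' arrows recur cofinally.

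A cleaner route for this step is to show that any filtered diagram in a finite category is essentially constant on an idempotent. Take a directed poset $\I$ and a cofinal chain-like choice. By pigeonhole and filteredness, there is $i_0$ such that for all $i\ge i_0$ there is $j\ge i$ with $D(i_0\le j)$ factoring through $D(i)$ and landing back in the same object. This yields an idempotent $e$ on an object $C$ such that the colimit is the splitting of $e$ when it exists. A constant $\lambda$-filtered diagram (for instance indexed by $\lambda$ itself) on $(C,e)$ has as its colimit exactly a splitting of $e$. So accessibility gives splittings of idempotents, hence \cref{stat:finite-accessible/cauchy}, which suffices anyway. In particular \cref{stat:finite-accessible/filtered}$\Rightarrow$\cref{stat:finite-accessible/cauchy} is the same observation: the splitting of $e$ is the colimit of the filtered diagram $\bN\arr\C$ that is constant at $C$ with transition maps $e$.

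The remaining step, and the main obstacle, is \cref{stat:finite-accessible/cauchy}$\Rightarrow$\cref{stat:finite-accessible/ind}. I must show that every $P\in\Ind(\C)$ is a retract of a representable; Cauchy completeness then makes it representable. Write $P=\Colim_{i\in\I}\yoneda D(i)$ with $\I$ filtered. Each hom-set $P(C)=\Colim_i\C(C,D(i))$ is a filtered colimit of sets of size $<N$, where $N$ bounds all hom-set sizes. By \cref{stat:less-than-N-colimit}, which applies since filtered categories admit cospans, each $P(C)$ is finite. There are finitely many objects, so the finitely many elements of $\bigsqcup_C P(C)$ all come from a single stage $i_0$ by filteredness. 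Enlarging $i_0$, the map $\yoneda D(i_0)\arr P$ is surjective, and the finitely many identifications needed in $P(C)$ are realized at a stage $j\ge i_0$. The delicate point is producing the section. Using finiteness again, I choose $j$ so that for every $k\ge j$ the images of $\C(C,D(j))\arr P(C)$ and $\C(C,D(k))\arr P(C)$ have the same kernel pattern. Then I take the element $x\in P(D(j))$ given by the colimit injection. By surjectivity it lifts along some stage map, and this yields a section $P\arr\yoneda D(j)$ of the canonical map. This exhibits $P$ as a retract of $\yoneda D(j)$, hence as representable by a splitting in $\C$. I expect the careful bookkeeping of this stabilization, checking that the composite $\yoneda D(j)\arr P\arr\yoneda D(j)$ is idempotent and that its splitting is $P$, to be the main work.
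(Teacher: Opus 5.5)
You handle (v)$\Rightarrow$(i) the way the paper does at first: each $P(C)$ is finite by \cref{stat:less-than-N-colimit}, then you exhibit $P$ as a retract of a representable and invoke Cauchy completeness. But the step that carries all the content, producing a section of some coprojection $\pi_j\colon\yoneda D(j)\to P$, is never actually carried out.

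Lifting the single element $x=[j,\id_{D(j)}]\in P(D(j))$ to some stage only yields a morphism $D(j)\to D(i)$, i.e.\ a map between representables. It does not define a natural transformation out of $P$. Your ``same kernel pattern'' condition is too vague to repair this: once $\pi_{i_0}$ is surjective, the images in $P(C)$ at every later stage are all of $P(C)$. Moreover, once a section $\sigma$ exists, idempotence of $\sigma\pi_j$ and $P$ being its splitting are automatic. So the ``main work'' you anticipate is in the wrong place.

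The paper avoids this bookkeeping. A finite-valued presheaf on a finite category is finitely presentable in $\Psh(\C)$, so $\id_P$ factors through some $\pi_j$.

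Alternatively, your own two preliminary observations suffice if used correctly:
\begin{enumerate}
\item Take $i_0$ with $\pi_{i_0}$ componentwise surjective.
\item The set of pairs $a,b\colon C\to D(i_0)$ with $[i_0,a]=[i_0,b]$ is finite. Using cocones and the equalizing property of $\I$, choose a single $u\colon i_0\to j$ with $D(u)a=D(u)b$ for all such pairs.
\end{enumerate}
Then $\yoneda D(u)$ coequalizes the kernel pair of the epimorphism $\pi_{i_0}$, so $\yoneda D(u)=\sigma\pi_{i_0}$ for some $\sigma\colon P\to\yoneda D(j)$. Since $\pi_j\sigma\pi_{i_0}=\pi_j\circ\yoneda D(u)=\pi_{i_0}$ and $\pi_{i_0}$ is epi, you get $\pi_j\sigma=\id_P$.

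Separately, your cycle breaks at (iii)$\Rightarrow$(iv). The pigeonhole sketch claiming that a filtered diagram in a finite category is ``essentially constant on an idempotent'' is not an argument. What you do prove correctly is (iii)$\Rightarrow$(v) and (iv)$\Rightarrow$(v). In both cases the colimit of the constant chain on an idempotent $e$ is the coequalizer of $e$ and $\id$, which is exactly a splitting of $e$. To close the loop you still need some implication into (iv). For instance (ii)$\Rightarrow$(iv) holds by the definition of finite accessibility, and this is precisely how the paper arranges the easy implications.
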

\begin{proof}
    The directions \cref*{stat:finite-accessible/ind} $\implies$ \cref*{stat:finite-accessible/finacc} $\implies$ \cref*{stat:finite-accessible/acc} $\implies$ \cref*{stat:finite-accessible/cauchy} and \cref*{stat:finite-accessible/finacc} $\implies$ \cref*{stat:finite-accessible/filtered} $\implies$ \cref*{stat:finite-accessible/cauchy} are trivial.
    We show \cref*{stat:finite-accessible/cauchy} $\implies$ \cref*{stat:finite-accessible/ind} below.
    Consider the Yoneda embedding $\C\arr(\yoneda)[hook]\Psh(\C)$.
    Let $D\colon\I\to \C$ be a functor from a small filtered category, and let $X\in\Psh(\C)$ be a colimit of $\I\arr(D)\C\arr(\yoneda)[hook]\Psh(\C)$.
    Our goal is to show $X$ is representable.

    We can show that $X$ is finitely presentable in $\Psh(\C)$ as follows.
    Since any filtered category admits cospans, \cref{stat:less-than-N-colimit} shows that $X(C)$ is finite for every $C\in\C$.
    Combining this with the finiteness of $\C$, we conclude that $X$ is finitely presentable.
    
    Thus, the identity $X\arr(\id) \Colim_{I\in\I}\yoneda D(I)$ factors through some coprojection.
    \begin{equation*}
        \begin{tikzcd}[small]
            X\ar[rr,equal]\ar[rd,dotted] && X \\
            & \yoneda D(I)\ar[ru,"\mathrm{coproj}_I"'] &
        \end{tikzcd}\incat{\Psh(\C)}
    \end{equation*}
    Thus, $X$ is a retract of a representable one.
    Since $\C$ is Cauchy complete, $X$ is representable.
\end{proof}

\begin{corollary}\label{stat:finite-sind-is-finacc}
    For every finite category $\C$, $\Sind(\C)$ is essentially finite and finitely accessible.
\end{corollary}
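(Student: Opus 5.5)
The plan is to show that $\Sind(\C)$ is itself (equivalent to) a finite category, namely a Cauchy complete one, and then invoke \cref{stat:finite-accessible}. Concretely, I would prove that every $X\in\Sind(\C)$ has uniformly bounded values: there is an $N>0$, depending only on $\C$, such that $\card{X(C)}<N$ for all $C\in\C$ and all $X\in\Sind(\C)$. For this I would use the description of $\Sind(\C)$ as those presheaves whose category of elements is sifted, together with the fact that $X\cong\Colim_{(C',x)\in\El X}\yoneda C'$. A sifted category admits cospans, since for any two objects the category of cospans is nonempty. So \cref{stat:less-than-N-colimit} applies to the diagram $\El X\arr\C\arr(\yoneda)\Psh(\C)\arr(\mathrm{ev}_C)\Set$ with
\[
N \coloneq 1+\max_{C,C'\in\C}\card{\C(C,C')}.
\]
This yields $\card{X(C)}<N$. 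Since $\C$ has finitely many objects and morphisms, there are only finitely many isomorphism classes of presheaves with all values of cardinality less than $N$. Hence $\Sind(\C)$ is essentially finite, and it is locally finite as well, because hom-sets of natural transformations between such presheaves are finite.

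Next I would check that $\Sind(\C)$ is Cauchy complete. Let $e\colon X\arr X$ be an idempotent in $\Sind(\C)$. Its splitting in $\Psh(\C)$ is the coequalizer of the pair $\id_X,e$, or equivalently the colimit of the diagram indexed by the walking idempotent. The walking idempotent category is sifted; indeed it is even filtered, since the single object is coequalized by $e$ itself. So the splitting is a sifted colimit of objects of $\Sind(\C)$. Because $\Sind(\C)\subseteq\Psh(\C)$ is closed under sifted colimits, the retract lies in $\Sind(\C)$ and splits the idempotent there.

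Finally, I would replace $\Sind(\C)$ by an equivalent finite skeleton $\C'$. Cauchy completeness transfers across the equivalence, so \cref{stat:finite-accessible} (the implication from Cauchy completeness to finite accessibility) shows that $\C'$, and hence $\Sind(\C)$, is finitely accessible. The only delicate step is the first one: confirming that the colimit formula is taken pointwise in $\Set$ over $\El X$, and that siftedness supplies the cospans that \cref{stat:less-than-N-colimit} requires. Both of these are standard, so I do not expect a real obstacle.
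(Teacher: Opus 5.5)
Your proposal is correct and follows the paper's proof: you bound $\card{X(C)}$ via \cref{stat:less-than-N-colimit} applied to the sifted category of elements, deduce essential finiteness, and conclude with Cauchy completeness and \cref{stat:finite-accessible}. Your bound $N$ differs harmlessly from the paper's $\card{\Mor\C}+1$. The paper asserts Cauchy completeness and the passage to a finite skeleton without comment; you justify the first via the walking idempotent (which is filtered, hence sifted) and carry out the second explicitly.
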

\begin{proof}
    Let $N\coloneq \card{\Mor\C} + 1$.
    Take $X\in\Sind(\C)$ arbitrarily.
    Since $X$ is a sifted colimit of representables and any sifted category admits cospans, \cref{stat:less-than-N-colimit} shows $\card{X(C)} < N$.
    Therefore, $\Sind(\C)$ has only finitely many objects up to isomorphism, and for each pair of objects, there are only finitely many morphisms between them.
    Since $\Sind(\C)$ is always Cauchy complete, \cref{stat:finite-accessible} implies that $\Sind(\C)$ is finitely accessible.
\end{proof}

\begin{example}\label{eg:twin-reflexive-pairs}
    Let $\D$ denote the category obtained by
    \begin{equation*}
        \begin{tikzcd}
            1\ar[rd,shift left=3,"d_0"]\ar[rd,shift right=3,"d_1"']
                &
                    &
                    1'\ar[ld,shift left=3,"d'_1"]\ar[ld,shift right=3,"d'_0"']
            \\
                &
                0\ar[ul,"s"{description}]\ar[ur,"s'"{description}]
        \end{tikzcd}
        \quad\text{with}\quad
        d_i\circ s = \id_0 = d'_i\circ s'
        \quad (i=0,1).
    \end{equation*}
    Since $\D$ is finite, \cref{stat:finite-sind-is-finacc} implies that $\Sind(\D)$ is finitely accessible.

    The finite accessibility of $\Sind(\D)$ does not stem from the Ind--Rec decomposition.
    Indeed, $\Rec(\D)$ can be obtained by adding coequalizers of both $(d_0,d_1)$ and $(d'_0,d'_1)$, and $\Ind(\Rec(\D)) \simeq \Rec(\D)$ by combining \cref{stat:finite-accessible} and the fact that every idempotent splits in $\Rec(\D)$.
    However, $\Sind(\D)$ additionally contains the joint coequalizer of $(d_0,d_1)$ and $(d'_0,d'_1)$ since $\D$ itself is sifted as shown in \cite[Example~1.3]{AdamekRosickyVitale2010what}.
    Therefore, the equivalence $\Sind(\D)\simeq\Ind(\Rec(\D))$ fails for this example.
\end{example}

The preceding results show that the (essentially) finiteness of $\C$ is sufficient for $\Sind(\C)$ to be finitely accessible.
It is natural to ask whether this assumption can be weakened from finiteness to finite presentability.
We end the paper with the following question.
\begin{openproblem}
    If $\C$ is a finitely presented category, is $\Sind(\C)$ finitely accessible?
\end{openproblem}

\section*{Use of generative AI in the research process}
During the development of the counterexample presented in \cref{sec:counterexample}, the author used ChatGPT (GPT-6.1 Sol, OpenAI) to explore and refine candidate constructions following a strategy devised by the author. The resulting construction and all mathematical arguments were independently verified and revised by the author. The author takes full responsibility for the mathematical content of this article.

\printbibliography

\end{document}